\newtheorem{theorem}{Theorem}[subsection]
\newtheorem*{theoremA}{Main Theorem}
\newtheorem{definition}[theorem]{Definition}
\newtheorem{claim}[theorem]{Claim}
\newtheorem{proposition}[theorem]{Proposition}
\newtheorem{notation}[theorem]{Notation}
\newtheorem{remark}[theorem]{Remark}
\newcommand{\lcs}[1]{\mathfrak{n}_{#1}}
\newcommand{\rs}[1]{\mathfrak{g}_{#1}}
\newdimen\cdsep
\def\cdstrut{\vrule height .6\cdsep width 0pt depth .4\cdsep}
\def\@cdstrut{{\advance\cdsep by 2em\cdstrut}}
\def\arrow#1#2{
	\ifx d#1
	\llap{$\scriptstyle#2$}\left\downarrow\cdstrut\right.\@cdstrut\fi
	\ifx u#1
	\llap{$\scriptstyle#2$}\left\uparrow\cdstrut\right.\@cdstrut\fi
	\ifx r#1
	\mathop{\hbox to \cdsep{\rightarrowfill}}\limits^{#2}\fi
	\ifx l#1
	\mathop{\hbox to \cdsep{\leftarrowfill}}\limits^{#2}\fi
}
\title[Characterising Semi-Simple Lie Algebras]{Characterising Semi-Simple Lie Algebras by Their Borel Nilpotent Radical}
\author{Guy Kapon, Lior Hadassy}
\begin{document}

\maketitle

\begin{abstract}
We show that semi-simple lie algebras can be characterized by their maximal nilpotent subalgebra, which is the same as the nilpotent radical of a Borel subalgebra.
\end{abstract}
\section{Introduction}
The classification of simple lie algebras is well known and is made in terms of the combinatorics of root systems. 

While this is satisfactory for many purposes, one might wonder if they can also be classified by more algebraic means, or at least characterized that way. Indeed, such a theorem can perhaps be extended to cases where the theory of root system is less applicable, like the classification of finite groups.

The first option is to characterize the algebra from a Borel subalgebra, which is quite easy. A less obvious question is the question if one can do this from the nilpotent radical of a Borel subalgebra, which is also a maximal nilpotent subalgebra, see \cite{MNA}, and we prove in this note:
\begin{theoremA}
\label{thmA}
Let $\rs{1},\rs{2}$ be two semi-simple lie algebras, let $\mathfrak{b}(\rs{1}),\mathfrak{b}(\rs{2})$ be Borel subalgebras, and let $\lcs{}(\rs{i})$ be the nilpotent radical of $\mathfrak{b}(\rs{i})$. Assume that $\lcs{}(\rs{1})\cong \lcs{}(\rs{2})$ then $\rs{1} \cong \rs{2}$.
\end{theoremA}
Originally, we proved this theorem for the simple case, while the semi-simple case does contain the simple case, it seemed useful to keep the simple case as a more hands-on example that demonstrates the ideas and calculations that appear in the general proof.

\subsection{Idea of The Proof}
The nilpotent radical is a nilpotent algebra, distinguishing between such algebras is relatively simple, as by definition they have a lot of ideals and therefore invariants.

For the simple case, our main tool is the lower central sequence.

It is defined by $\lcs{1}=\lcs{}, \lcs{i+1}=[\lcs{i},\lcs{}]$, and is obviously an invariant of the algebra. 

Already $\rm{dim}(\lcs{1}/\lcs{2})$ recovers the rank of the original simple algebra, and together with the dimension of the algebra this almost completes the proof. 

To deal with the remaining cases we consider the product $\lcs{i}/\lcs{i+1} \times \lcs{j}/\lcs{j+1}\rightarrow \lcs{i+j}/\lcs{i+j+1}$ induced by the lie bracket, and show that in the remaining cases these are not always the same.

For the semi-simple case we have to add the upper central sequence $\mathfrak{u}_{i}$ which is defined in a similar way to $\lcs{i}$, and also the analogous product $\lcs{i}/\lcs{i+1} \times \mathfrak{u}_{j}/\mathfrak{u}_{j-1} \rightarrow \mathfrak{u}_{j-i}/\mathfrak{u}_{j-i-1}$.
\begin{comment}
\subsection{$A_{n}$}
root system: $$\{ \epsilon_{i} - \epsilon_{j}\}_{i\neq j}$$
$$ \mathrm{dim}(A_{n})= 2{n+1 \choose 2}+n= n(n+1) + n$$
simple roots: $$\{ \epsilon_{i} - \epsilon_{i+1}\}_{1\leq i \leq n}$$

\subsection{$B_{n}$}
root system: $$\{ \pm \epsilon_{i} \pm \epsilon_{j}\}_{i\neq j}\cup \{ \pm \epsilon_{i} \}$$
$$ \mathrm{dim}(B_{n})= 4{n \choose 2}+2n+n= 2n^2+n$$
simple roots: $$\{ \epsilon_{i} - \epsilon_{i+1}\}_{1\leq i \leq n-1}\cup \{\epsilon_{n}\}$$
$$ h(\epsilon_{i}-\epsilon_{j})=j-i$$
$$ h(\epsilon_{i}) = n-i + 1$$
$$ h(\epsilon_{i} + \epsilon_{j}) = 2n-i - j + 2$$

\subsection{$C_{n}$}
root system: $$\{ \pm \epsilon_{i} \pm \epsilon_{j}\}_{i\neq j}\cup \{ 2\epsilon_{i} \}$$
$$ \mathrm{dim}(C_{n})= 4{n \choose 2}+2n+n= 2n^2+n$$
simple roots: $$\{ \epsilon_{i} - \epsilon_{i+1}\}_{1\leq i \leq n-1}\cup \{2\epsilon_{n}\}$$
$$ h(\epsilon_{i}-\epsilon_{j})=j-i$$
$$ h(2\epsilon_{i}) = 2n-2i + 1$$
$$ h(\epsilon_{i} + \epsilon_{j}) = 2n-i -j + 1$$

\subsection{$D_{n}$}
root system: $$\{ \pm \epsilon_{i} \pm \epsilon_{j}\}_{i\neq j}$$
$$ \mathrm{dim}(B_{n})= 4{n \choose 2}+n= 2n(n-1)+n$$
simple roots: $$\{ \epsilon_{i} - \epsilon_{i+1}\}_{1\leq i \leq n-1}\cup \{\epsilon_{n-1} + \epsilon_{n}\}$$
\end{comment}

\section{The Simple Case}
We follow the notations of \cite{AK}, everything is done over $\mathbb{C}$.

We have a (semi-)simple algebra $\rs{}$, a Cartan subalgebra $\mathfrak{h}(\rs{})$, its root system is denoted $R(\rs{})$, and its root spaces are $\rs{\alpha}$, we will assume that some Borel subalgebra $\mathfrak{b}(\rs{})$ containing $\mathfrak{h}(\rs{})$ was chosen, and we denote by $\mathfrak{n}(\rs{})=[\mathfrak{b}(\rs{}),\mathfrak{b}(\rs{})]$ its nilpotent radical.
We have: $$\mathfrak{n}(\rs{}) =\bigoplus_{\alpha \in R_{+}} \rs{\alpha} $$

Where $R_{+}$ is the set of positive roots with respect to our chosen Borel subalgebra. When there is no confusion about the simple lie algebra, we will drop it from the notation.

Our goal is to prove that we can characterize a simple lie algebra from the structure of $\mathfrak{n}$. Two invariants that we will explain now will suffice.
\subsection{Lower Central Series}
The Lower Central Series of $\mathfrak{n}$ is defined by $\lcs{1} = \lcs{} $ and $\lcs{i+1} = [\lcs{i},\mathfrak{n}]$. 

Let us calculate the lower central series in our case.

For a positive root $\alpha \in R_+$ we define its degree $\rm{deg}(\alpha)$ as the sum of coefficients when we write $\alpha$ as a sum of simple roots.

\begin{claim}
For $i\geq 1$, $\lcs{i}$ is the subspace generated by $\rs{\alpha}$ for $\alpha$ with $\rm{deg}(\alpha)\geq i$.
\end{claim}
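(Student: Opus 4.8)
\textbf{Proof plan.}
The plan is to argue by induction on $i$. The base case $i=1$ is immediate, since $\lcs{1}=\lcs{}=\bigoplus_{\alpha\in R_+}\rs{\alpha}$ and every positive root has degree at least $1$. Assume then that the claim holds for some $i\geq 1$, and consider $\lcs{i+1}=[\lcs{i},\lcs{}]$.

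For the inclusion ``$\subseteq$'' I would use the basic relation $[\rs{\alpha},\rs{\beta}]\subseteq\rs{\alpha+\beta}$, read as $0$ whenever $\alpha+\beta$ fails to be a root, together with additivity of the degree. By the inductive hypothesis $\lcs{i}$ is spanned by the $\rs{\alpha}$ with $\deg(\alpha)\geq i$ and $\lcs{}$ by the $\rs{\beta}$ with $\beta\in R_+$, so bilinearity of the bracket shows that $\lcs{i+1}$ is spanned by the various $[\rs{\alpha},\rs{\beta}]$, each contained in $\rs{\alpha+\beta}$ with $\deg(\alpha+\beta)=\deg(\alpha)+\deg(\beta)\geq i+1$. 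Hence $\lcs{i+1}\subseteq\bigoplus_{\deg(\gamma)\geq i+1}\rs{\gamma}$.

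For the reverse inclusion the one non-formal ingredient is the standard root-system lemma that every positive root $\gamma$ which is not simple can be written $\gamma=\alpha+\beta$ with $\beta$ simple and $\alpha$ again a positive root, so that necessarily $\deg(\alpha)=\deg(\gamma)-1$. One proves this by writing $\gamma=\sum_j c_j\beta_j$ as a nonnegative integer combination of the simple roots and using $0<(\gamma,\gamma)=\sum_j c_j(\gamma,\beta_j)$ to choose an index $j$ with $c_j>0$ and $(\gamma,\beta_j)>0$; then $\gamma-\beta_j$ is a root, and a positive one, since its remaining coefficients are still nonnegative and not all zero. Granting this, take any positive root $\gamma$ with $\deg(\gamma)\geq i+1$; in particular $\deg(\gamma)\geq 2$, so $\gamma$ is not simple and we may write $\gamma=\alpha+\beta$ as above with $\deg(\alpha)=\deg(\gamma)-1\geq i$. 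By the inductive hypothesis $\rs{\alpha}\subseteq\lcs{i}$, while $\rs{\beta}\subseteq\lcs{}$, and since $\alpha+\beta=\gamma$ is a root we have $[\rs{\alpha},\rs{\beta}]=\rs{\gamma}$ (the relevant structure constant is nonzero precisely because $\alpha+\beta$ is a root). Thus $\rs{\gamma}\subseteq[\lcs{i},\lcs{}]=\lcs{i+1}$, and summing over all such $\gamma$ completes the induction.

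I do not expect a genuine obstacle here: beyond the bookkeeping, the argument rests only on two standard facts from the structure theory of semisimple Lie algebras over $\mathbb{C}$ — that subtracting a suitable simple root from a non-simple positive root again yields a positive root, and that $[\rs{\alpha},\rs{\beta}]=\rs{\alpha+\beta}$ as soon as $\alpha+\beta$ is a root — so the main care is simply in tracking the degree and positivity conditions at each step.
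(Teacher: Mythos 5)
Your proof is correct and follows essentially the same route as the paper: induction on $i$, using $[\rs{\alpha},\rs{\beta}]\subseteq\rs{\alpha+\beta}$ for the forward inclusion, and for the reverse inclusion writing a positive root of degree $\geq i+1$ as $\alpha+\beta$ with $\beta$ simple and $\alpha$ a positive root of one lower degree, then invoking $[\rs{\alpha},\rs{\beta}]=\rs{\alpha+\beta}$. The only difference is that you spell out the proof of the root-subtraction lemma (via the inner product argument), which the paper simply cites as known.
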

\begin{proof}
For $i=1$ this is obvious, we proceed by induction.

Let us denote the subspace generated by $\rs{\alpha}$ for $\rm{deg}(\alpha)\geq i$ by $\lcs{i}'$, and assume we have proved $\lcs{i} = \lcs{i}'$. 

Since $[\rs{\alpha},\rs{\beta}]\subseteq \rs{\alpha + \beta}$, and the representation of a root as a sum of simple roots is unique, it is obvious that $\lcs{i+1}=[\lcs{i},\lcs{}]\subseteq \lcs{i+1}'$. 

On the other hand, let $\alpha$ be a root with $\rm{deg}(\alpha)\geq i+1$, then we know that there is a simple root $\beta$ s.t $\alpha - \beta$ is also a root, in that case $[\rs{\alpha-\beta},\rs{\beta}]=\rs{\alpha}$, obviously $\rm{deg}(\alpha-\beta)= \rm{deg}(\alpha)-1\geq i$ Since $\rm{deg}$ is linear, and so $[\rs{\alpha-\beta},\rs{\beta}]\subseteq [\lcs{i},\lcs{}]=\lcs{i+1}$ and we are done.
\end{proof}
The lower central series has an additional structure in our case:
\begin{notation}
For $i\geq 1$ denote $\lcs{}^{i} = \lcs{i}/\lcs{i+1}$.
\end{notation}
\begin{claim}
For any $i,j\geq 1$ we have a bilinear map $\lcs{}^{i} \times \lcs{}^{j} \rightarrow \lcs{}^{i+j}$
\end{claim}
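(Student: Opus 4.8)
The plan is to obtain the asserted map simply by passing the Lie bracket of $\mathfrak{n}$ to the quotients $\lcs{}^{i}=\lcs{i}/\lcs{i+1}$. The one fact that makes this work is the inclusion
\[
[\lcs{i},\lcs{j}]\subseteq \lcs{i+j}\qquad(i,j\geq 1),
\]
which I would read off the previous claim: $\lcs{i}$ is spanned by the root spaces $\rs{\alpha}$ with $\deg(\alpha)\geq i$, and for roots $\alpha,\beta$ one has $[\rs{\alpha},\rs{\beta}]\subseteq \rs{\alpha+\beta}$, which is zero unless $\alpha+\beta$ is again a root, in which case $\deg(\alpha+\beta)=\deg(\alpha)+\deg(\beta)\geq i+j$ because $\deg$ is linear. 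So every bracket of a spanning vector of $\lcs{i}$ with one of $\lcs{j}$ lands in $\lcs{i+j}$, and bilinearity of the bracket extends this to all of $[\lcs{i},\lcs{j}]$. (One could instead induct on $j$ from $\lcs{1}=\lcs{}$ using the Jacobi identity, but the grading makes that detour unnecessary.)

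From here the claim is immediate. Applying the inclusion with $i+1$ or $j+1$ in place of $i$ or $j$ gives $[\lcs{i+1},\lcs{j}]\subseteq\lcs{i+j+1}$ and $[\lcs{i},\lcs{j+1}]\subseteq\lcs{i+j+1}$. Hence the bilinear composite
\[
\lcs{i}\times\lcs{j}\xrightarrow{\;[\cdot,\cdot]\;}\lcs{i+j}\longrightarrow \lcs{i+j}/\lcs{i+j+1}=\lcs{}^{i+j},\qquad (x,y)\mapsto\overline{[x,y]},
\]
annihilates $\lcs{i+1}\times\lcs{j}$ and $\lcs{i}\times\lcs{j+1}$; equivalently, if $x\equiv x'$ and $y\equiv y'$ modulo $\lcs{i+1}$, $\lcs{j+1}$ respectively, then $[x,y]-[x',y']=[x-x',y]+[x',y-y']\in\lcs{i+j+1}$. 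So the composite factors through the quotients and yields a well-defined bilinear map $\lcs{}^{i}\times\lcs{}^{j}\to\lcs{}^{i+j}$. For the boundary index one fixes the convention $\lcs{0}:=\lcs{}$ (so $\lcs{}^{0}=0$ and the statement is vacuous there) or, if one prefers $\lcs{0}:=\mathfrak{b}$ with $\lcs{}^{0}=\mathfrak{h}$, the identical argument applies since $[\mathfrak{b},\lcs{j}]\subseteq\lcs{j}$.

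I do not anticipate any genuine obstacle here: the only points needing care are that $\deg$ is additive on sums of roots and that $\alpha+\beta$ may fail to be a root, in which case the bracket is zero and the inclusion holds trivially. The substance of the claim is not its proof but its payoff: for each pair $(i,j)$ it produces an honest \emph{invariant} of $\mathfrak{n}$ — the isomorphism class of this bracket map, or numerically its rank or nullity — and it is precisely these finer invariants that will later be used to separate the simple Lie algebras that $\dim\lcs{}^{1}=|R_{+}|$, $\dim\lcs{}^{1}/\lcs{}^{2}$ (the rank) and $\dim\mathfrak{n}$ do not already distinguish.
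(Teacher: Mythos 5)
Your argument is the same as the paper's: establish $[\lcs{i},\lcs{j}]\subseteq\lcs{i+j}$ from the degree description of the lower central series (additivity of $\deg$ on sums of roots, or zero bracket when $\alpha+\beta$ is not a root), then observe that $\lcs{i+1}\times\lcs{j}$ and $\lcs{i}\times\lcs{j+1}$ land in $\lcs{i+j+1}$ so the bracket descends to a well-defined bilinear map on the quotients. Your brief remark on the $i=0$ or $j=0$ boundary case is a small extra care the paper does not spell out, but it does not change the substance.
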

\begin{proof}
By our description of $\lcs{i}$ it's obvious that $[\lcs{i},\lcs{j}]\subseteq \lcs{i+j}$, which therefore defines a product $\lcs{i}\times\lcs{j}\rightarrow \lcs{i+j}$.

We can then quotient out $\lcs{i+j+1}$, and it only remains to see that $\lcs{i+1}\times \lcs{j}$ and $\lcs{i}\times \lcs{j+1}$ go to zero, but we already explained that their commutator is in $\lcs{i+j+1}$ as required.
\end{proof}
\begin{remark}
While it may seem we used our description of $\lcs{i}$, we only used this for $[\lcs{i},\lcs{j}]\subseteq \lcs{i+j}$ which is true in general by the Jacobi identity and induction. 
\end{remark}
\begin{remark}
Both of the claims can be described by saying that $gr(\lcs{} ):=\bigoplus_{i} \lcs{}^{i}$ is a lie algebra by itself, which in our case is in fact (non-canonically) isomorphic to $\lcs{}$.
\end{remark}

\subsection{Rank}
From the previous section we see that $\lcs{}^{i}$ is isomorphic to the space generated by $\rs{\alpha}$ with $\rm{deg}(\alpha)= i$, so the number of such roots is an invariant of $\lcs{}$.

In particular, the number of roots of degree $1$, i.e. simple roots, is an invariant which is equal to the rank of the original lie algebra.

This means we only have to distinguish between $A_{n},B_{n},C_{n},D_{n}$ and the exceptionals for a constant $n$.

Now we can compare dimensions, since $\rm{dim}(\rs{})=2\cdot \rm{dim}(\lcs{})+n$ we will compare the dimension of $\rs{}$ instead of the dimension of $\lcs{}$, these numbers are well known:
\begin{center}
\begin{tabular}{||c c c||} 
 \hline
 Lie Algebra & Rank & Dimension \\ [0.5ex] 
 \hline\hline
 $A_n$ & $n$ & $n(n+2)$ \\ 
 \hline
 $B_n$ & $n$ & $n(2n+1)$ \\ 
 \hline
 $C_n$ & $n$ & $n(2n+1)$ \\ 
 \hline
 $D_n$ & $n$ & $n(2n-1)$ \\ 
 \hline
 $E_6$ & $6$ & $78$ \\ 
 \hline
 $E_7$ & $7$ & $133$ \\ 
 \hline
 $E_8$ & $8$ & $248$ \\ 
 \hline
 $F_4$ & $4$ & $52$ \\ 
 \hline
 $G_2$ & $2$ & $14$ \\  [1ex] 
 \hline
\end{tabular}
\end{center}
This table can be found for example at \cite[Table 2]{LLL}.

From the table, we easily see that $B_n,C_n$ always have the same dimension, which is always different from that of $D_n$. 

$A_n$ has the same dimension as $B_n$ or $C_n$ only for $n=1$, and the same dimension as $D_n$ only for $n=3$, both of which corresponds to the well-known facts that $A_{1}=B_{1}=C_{1}$ and $A_{3}=D_{3}$.

Finally, the only exceptional that has the same dimension as a classic algebra of the same rank is $E_{6}$ with $B_{6}$ and $C_{6}$.

In summary, we proved:
\begin{proposition}
\label{Prop1}
If $\rs{1},\rs{2}$ are two simple lie algebras, and we have $\lcs{}(\rs{1}) \cong \lcs{}(\rs{2})$ then either $\rs{1} \cong \rs{2}$, $\{\rs{1},\rs{2}\}\cong\{B_{n},C_{n}\}$ or one of the algebras is $E_{6}$ and the other one is $B_{6}$ or $C_{6}$.
\end{proposition}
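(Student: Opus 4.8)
The plan is to extract two numerical invariants of $\lcs{}$ — the rank and the dimension of the ambient simple algebra — and then read the statement off the dimension table by a finite check.

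First I would use the description of the graded pieces established above: $\dim \lcs{}^1 = \dim(\lcs{1}/\lcs{2})$ is the number of positive roots of degree $1$, i.e. the number of simple roots, which is the rank $n$ of $\rs{}$. Hence an isomorphism $\lcs{}(\rs{1}) \cong \lcs{}(\rs{2})$ forces $\rs{1}$ and $\rs{2}$ to have equal rank. Moreover $\dim \lcs{}$ is visibly an invariant of $\lcs{}$, and from the triangular decomposition $\rs{} = \mathfrak{n}_- \oplus \mathfrak{h}(\rs{}) \oplus \lcs{}$, with $\dim \mathfrak{n}_- = \dim \lcs{}$ and $\dim \mathfrak{h}(\rs{}) = n$, we recover $\dim \rs{} = 2\dim \lcs{} + n$; so $\rs{1}$ and $\rs{2}$ also have equal dimension.

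It then remains to list all pairs of non-isomorphic simple Lie algebras with the same rank and the same dimension, which is a finite computation against the table. For fixed rank $n$ the classical dimensions are $n(n+2)$, $n(2n+1)$, $n(2n+1)$, $n(2n-1)$ for $A_n$, $B_n$, $C_n$, $D_n$ respectively; solving the equalities gives: $B_n$ and $C_n$ always agree; $n(n+2) = n(2n+1)$ only for $n = 1$; $n(n+2) = n(2n-1)$ only for $n = 3$; and $n(2n+1) = n(2n-1)$ never for $n \ge 1$. The last two solutions are exactly the coincidences $A_1 \cong B_1 \cong C_1$ and $A_3 \cong D_3$, already absorbed into the alternative $\rs{1} \cong \rs{2}$. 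Since the exceptional algebras have pairwise distinct ranks $2, 4, 6, 7, 8$, it suffices to compare each with the classical algebra of its own rank: reading off those five rows, $E_6$ has dimension $78 = \dim B_6 = \dim C_6$, whereas $G_2$, $F_4$, $E_7$, $E_8$ have no classical partner of equal rank and dimension. Assembling these cases yields the three alternatives in the statement.

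The only delicate point is the low-rank range in which the symbols $B_n$, $C_n$, $D_n$ degenerate: $D_1$ and $D_2$ are not simple and $D_3 \cong A_3$, so I would run the comparison for $n$ large enough that the listed algebras are genuinely simple and pairwise non-isomorphic apart from the coincidences above, and settle the finitely many small ranks by direct inspection. Beyond this bookkeeping there is no real obstacle — the argument is just the two invariants (rank and dimension) together with the finite arithmetic check against the table.
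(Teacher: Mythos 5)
Your proposal is correct and follows essentially the same route as the paper: extract the rank as $\dim(\lcs{1}/\lcs{2})$, recover $\dim\rs{}$ from $\dim\rs{}=2\dim\lcs{}+n$, and compare against the dimension table to identify the only coincidences ($B_n$ with $C_n$, $A_1$ with $B_1=C_1$, $A_3$ with $D_3$, and $E_6$ with $B_6,C_6$). The low-rank bookkeeping you flag is the same caveat the paper implicitly relies on, so there is no gap.
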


The next natural step is to look at the number of roots of height $i$ for a general $i$. 

It turns out that those numbers are the same for $B_{n}$ and $C_{n}$, but do distinguish between $E_{6}$ and $B_{6},C_{6}$.

Specifically $E_{6}$ has $5$ roots of degree $4$ while $B_{6},C_{6}$ have $4$ such roots. 

Both claims can be checked by writing down root systems. For $B_{6},C_{6}$ we will do this later.One can also look at \cite[Appendix]{CMR} where root posets for those root systems are drawn, in particular, one can see this result.

\subsection{$B_{n}$ and $C_{n}$}
As we mentioned, the number of roots of each degree for $B_{n}$ and $C_{n}$ is the same. Still, the combinatorics of the roots is different, and we claim that our bilinear product will distinguish between them.

First, we write down the root systems and our choice of simple roots. Both are written inside the vector space with basis $\{e_{i}\}_{1\leq i\leq n}$ like in \cite[205-206]{AK}:

$$R(B_{n}) = \{ \pm e_{i} \pm e_{j}\}_{i\neq j}\cup \{ \pm e_{i} \}$$
$$\Pi(B_{n}) =\{ e_{i} - e_{i+1}\}_{1\leq i \leq n-1}\cup \{e_{n}\}$$

$$R(C_{n}) = \{ \pm e_{i} \pm e_{j}\}_{i\neq j}\cup \{ \pm 2e_{i} \}$$
$$\Pi(C_{n}) = \{ e_{i} - e_{i+1}\}_{1\leq i \leq n-1}\cup \{2e_{n}\}$$

Now let us consider in both cases the bilinear product:
$$ \lcs{}^{2} \times \lcs{}^{2n-3} \rightarrow \lcs{}^{2n-1} $$
Notice that in both cases $\lcs{2n-1}$ is $1$ dimensional, while $\lcs{2n}=0$. In $B_{n}$ the root of highest degree is $e_{1}+e_{2}$ with degree $2n-1$, and in $C_{n}$ it is $2e_{1}$ also with degree $2n-1$.

Next we see that $\lcs{}^{2n-3}$ is two dimensional in both cases, in the case of $B_{n}$ the roots of degree $2n-3$ are $e_{1}+e_{4},e_{2}+e_{3}$ and for $C_{n}$ there are $e_{1}+e_{3},2e_{2}$.

Here we have assumed $n\geq 4$, since we had $e_{1}+e_{4}$, this discussion however is also true for $n=3$, by setting $e_{4}=0$ in this case. 

For $n=2$ or $n=1$ the dimension calculation is wrong, but in those cases $B_{2}=C_{2},B_{1}=C_{1}$ so there is nothing to prove. 

\begin{claim}
In the case of $C_{n}$, $n\geq 3$, this bilinear product has non-trivial right kernel, specifically it contains $\rs{2e_{2}}$.
\end{claim}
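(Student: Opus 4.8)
The plan is to unwind the definition of the bilinear product and reduce everything to a one-line root computation. Recall from the description of the lower central series that $\lcs{2}$ is spanned by the root spaces $\rs{\beta}$ with $\mathrm{deg}(\beta)\geq 2$, that the class of $\rs{2e_2}$ really lies in $\lcs{}^{2n-3}$ because $2e_2$ is a positive root of degree $2n-3$ (expanding with respect to $\Pi(C_n)$ one gets $2e_2 = 2\sum_{i=2}^{n-1}(e_i-e_{i+1}) + 2e_n$, so its degree is $2(n-2)+1 = 2n-3$), and that $\lcs{}^{2n-1} = \lcs{2n-1}$ is one-dimensional, spanned by the root space of the highest root $2e_1$ (of degree $2n-1$), with $\lcs{2n}=0$. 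Since the product sends the class of $x\in\lcs{2}$ and the class of $y\in\lcs{2n-3}$ to the class of $[x,y]$ in $\lcs{2n-1}/\lcs{2n}=\lcs{2n-1}$, the element $\rs{2e_2}$ lies in the right kernel if and only if $[\lcs{2},\rs{2e_2}]\subseteq\lcs{2n}=0$.

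To establish this I would check that $[\rs{\beta},\rs{2e_2}]=0$ for every positive root $\beta$ with $\mathrm{deg}(\beta)\geq 2$. Because $[\rs{\beta},\rs{2e_2}]\subseteq\rs{\beta+2e_2}$ and $\mathrm{deg}$ is additive, a nonzero bracket would force $\beta+2e_2$ to be a positive root of degree at least $(2n-3)+2 = 2n-1$. But $2n-1$ is the maximal degree of a root of $C_n$, attained only by $2e_1$, so we would need $\beta+2e_2 = 2e_1$, i.e. $\beta = 2e_1-2e_2$. Reading off the root system $R(C_n)=\{\pm e_i\pm e_j\}_{i\neq j}\cup\{\pm 2e_i\}$, the vector $2e_1-2e_2$ is not a root; hence no such $\beta$ exists, every bracket $[\rs{\beta},\rs{2e_2}]$ vanishes, and therefore $[\lcs{2},\rs{2e_2}]=0$, proving that $\rs{2e_2}$ is in the right kernel.

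There is essentially no obstacle here — the whole argument is a degree count together with one explicit non-membership check — so the only things that require care are bookkeeping: confirming the degree of $2e_2$ and of the highest root $2e_1$ against the chosen simple roots, and confirming that $2e_1$ is the unique root of maximal degree so that indeed $\lcs{2n}=0$ and $\lcs{}^{2n-1}$ is one-dimensional. I would also note in passing that the hypothesis $n\geq 3$ is precisely what makes $2e_2$ a non-simple root of degree strictly below $2n-1$, so that the claimed kernel element is a genuine element of $\lcs{}^{2n-3}$ and the statement is not vacuous; for $n=2$ one has $B_2=C_2$ and there is nothing to prove, consistent with the surrounding discussion.
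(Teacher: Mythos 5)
Your proof is correct and takes essentially the same route as the paper: a degree count forces any nonzero bracket to land in the highest root space $\rs{2e_1}$, and then one checks that $2e_1-2e_2\notin R(C_n)$. Your version also spells out the degree of $2e_2$ and the one-dimensionality of $\lcs{}^{2n-1}$, which is helpful bookkeeping the paper leaves implicit (and incidentally fixes a typo in the paper, which writes $\alpha+2e_3$ where $\alpha+2e_2$ is intended).
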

\begin{proof}
For any $\alpha$ of degree $2$, $\alpha+2e_{3}\notin R$,  if this was not the case, $\alpha + 2e_{3}$ would have to be $2e_{1}$, because its degree is $2n-1$, but $2e_{1}-2e_{2}$ is not a root.

This means that $[\rs{\alpha},\rs{2e_{2}}] = \rs{\alpha + 2e_{2}}=0$ for any root of degree $2$, but those $\rs{\alpha}$ exactly generate $\lcs{}^{2}$.
\end{proof}

So now to finish we only have to prove:
\begin{claim}
In the case of $B_{n}$, $n\geq 3$, the bilinear product is non-degenerate on the right.
\end{claim}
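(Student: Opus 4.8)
The plan is to exhibit the $2\times 2$ structure matrix of this product in suitable bases and check that it is diagonal with nonzero diagonal entries. From the degree computation, for $B_{n}$ the unique root of degree $2n-1$ is $e_{1}+e_{2}$, so (as $\lcs{2n}=0$) $\lcs{}^{2n-1}=\lcs{2n-1}=\rs{e_{1}+e_{2}}$ is one dimensional; the two roots of degree $2n-3$ are $e_{1}+e_{4}$ and $e_{2}+e_{3}$, so $\lcs{}^{2n-3}=\rs{e_{1}+e_{4}}\oplus\rs{e_{2}+e_{3}}$; and among the roots of degree $2$ I will use only $e_{1}-e_{3}$ and $e_{2}-e_{4}$, both of which lie in $\lcs{}^{2}$. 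When $n=3$ I keep the convention $e_{4}=0$ used above, so that ``$e_{1}+e_{4}$'' is $e_{1}$ (of degree $2n-3=3$) and ``$e_{2}-e_{4}$'' is $e_{2}$ (of degree $2$); these are genuine roots of $B_{3}$ and the argument below is unchanged. For $n\le 2$ there is nothing to prove since $B_{1}=C_{1}$ and $B_{2}=C_{2}$.

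Next I would fix nonzero root vectors $x\in\rs{e_{1}+e_{4}}$, $y\in\rs{e_{2}+e_{3}}$, take an arbitrary nonzero element $v=ax+by\in\lcs{}^{2n-3}$ with $(a,b)\neq(0,0)$, and pair it against root vectors from $\rs{e_{2}-e_{4}}$ and $\rs{e_{1}-e_{3}}$. Using that $[\rs{\alpha},\rs{\beta}]=\rs{\alpha+\beta}$ when $\alpha+\beta\in R(B_{n})$ and $[\rs{\alpha},\rs{\beta}]=0$ when $\alpha+\beta$ is a nonzero non-root, everything reduces to four sums: $(e_{2}-e_{4})+(e_{1}+e_{4})=e_{1}+e_{2}$ and $(e_{1}-e_{3})+(e_{2}+e_{3})=e_{1}+e_{2}$ are roots, whereas $(e_{2}-e_{4})+(e_{2}+e_{3})=2e_{2}+e_{3}-e_{4}$ and $(e_{1}-e_{3})+(e_{1}+e_{4})=2e_{1}-e_{3}+e_{4}$ each have a coordinate equal to $2$, so are not roots of $B_{n}$. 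Hence $[\rs{e_{2}-e_{4}},v]$ spans $\rs{e_{1}+e_{2}}$ when $a\neq 0$ and $[\rs{e_{1}-e_{3}},v]$ spans $\rs{e_{1}+e_{2}}$ when $b\neq 0$, and since $(a,b)\neq(0,0)$ at least one of these is nonzero.

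This says precisely that $v$ is not in the right kernel of $\lcs{}^{2}\times\lcs{}^{2n-3}\to\lcs{}^{2n-1}$; as $v$ was an arbitrary nonzero element, the product is non-degenerate on the right. Together with the preceding claim this shows the graded Lie algebra $gr(\lcs{})$ distinguishes $B_{n}$ from $C_{n}$ for all $n\geq 3$, and combined with Proposition \ref{Prop1} and the degree-$4$ root count separating $E_{6}$ from $B_{6}$ and $C_{6}$, the Main Theorem follows.

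There is no analytic content here; the only delicate point is the root bookkeeping, namely recognising that the right degree-$2$ partners are $e_{1}-e_{3}$ and $e_{2}-e_{4}$. This is exactly where $B_{n}$ behaves differently from $C_{n}$: in $C_{n}$ the element $2e_{2}$ of $\lcs{}^{2n-3}$ admits no degree-$2$ partner reaching the top root $2e_{1}$, because $2e_{1}-2e_{2}\notin R(C_{n})$, while in $B_{n}$ the root $e_{1}-e_{3}$ does reach $e_{1}+e_{2}$. One should also take care to verify that the two off-diagonal sums genuinely fail to be roots and to treat the case $n=3$ separately, as indicated above.
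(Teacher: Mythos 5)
Your argument is correct and is essentially identical to the paper's: you use the same two degree-$2$ roots $e_{1}-e_{3}$ and $e_{2}-e_{4}$, verify the same four sums (two yielding $e_{1}+e_{2}$ and two yielding non-roots with a repeated coordinate), and conclude via the resulting diagonal $2\times2$ structure matrix, with the same $e_{4}=0$ convention for $n=3$. The only cosmetic difference is that you phrase the conclusion in terms of an arbitrary $v=ax+by$ rather than simply displaying the table of brackets.
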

\begin{proof}
Similar to before, notice that $(e_{1}+e_{4}) + (e_{2}-e_{4})=(e_{2}+e_{3})+(e_{1}-e_{3})=e_{1}+e_{2}$, which means that
\begin{center}
\begin{tabular}{ c c  }
 $[\rs{e_{2}-e_{4}}\rs{e_{1}+e_{4}}]=\rs{e_{1}+e_{2}}$ & $[\rs{e_{1}-e_{3}}\rs{e_{1}+e_{4}}]=0$  \\  
 \\
 $[\rs{e_{2}-e_{4}}\rs{e_{2}+e_{3}}]=0$ &$[\rs{e_{1}-e_{3}}\rs{e_{2}+e_{3}}]=\rs{e_{1}+e_{2}}$    
\end{tabular}
\end{center}
Which obviously means that the bilinear product is non-degenerate on the right.

(For $n=3$ the same calculation holds after erasing $e_{4}$)
\end{proof}

So now we are done.
\begin{proof}[Proof of The Simple Case] By proposition \ref{Prop1} with addition to the remarks after it, it only remains to show that $\lcs{}(B_{n})$ is not isomorphic to $\lcs{}(C_{n})$ for $n\geq 3$ as for $n\leq 2$, $B_{n}\cong C_{n}$.

If they were isomorphic then we also would have had an isomorphism of the lower central series, and the bilinear product we constructed on it. but we just showed that for $B_{n}$ one of those products is non-degenerate on the right, while for $C_{n}$ the same one is, and therefore they are not isomorphic.
\end{proof}
One notice that the calculation of the right kernel was completely combinatoric, this is true in general, and in the semi-simple case, we will not go into details about such calculations.

\medskip
%??????!!!!%
\section{The Semi-Simple Case}
For the semi-simple case, we have to distinguish between direct sums of simple algebras, all the invariants that we had before are still invariants, their value is the sum over the simple algebras of the corresponding invariant. 

For example $\rm{dim}(\lcs{}^1)$ will be the sum over the simple algebras, of their rank. 

To use this we will need another set of invariants which in the previous case was redundant.
\subsection{The Upper Central Series} The upper central series is the dual notion to the lower central series:

\begin{definition}
$\mathfrak{u}_{0} = 0$, and $\mathfrak{u}_{i+1}$ is defined as the unique subalgebra containing $\mathfrak{u}_{i}$ such that $\mathfrak{u}_{i+1}/\mathfrak{u}_{i} = Z(\lcs{}/\mathfrak{u}_{i}) $ where $Z$ denotes the center of a lie algebra.
\end{definition}

Like the lower central series, this is an invariant, we had no need of it in the case of a simple algebra, because of:
\begin{claim}
If $\lcs{}$ is the nilpotent radical of a Borel subalgebra of a simple lie algebra, then the lower central series and the upper central series are the same up to a change of indexes.
\end{claim}
As we already have a description for the lower central series, we only have to show that this description holds for $\mathfrak{u}_{i}$.
Let $d$ be the degree of the highest root of $\rs{}$ then a more precise statement can be written:
\begin{claim}
For $i\geq 0$, $\mathfrak{u}_{i}$ is the subspace generated by $\rs{\alpha}$ for $\alpha$ with $\rm{deg}(\alpha) > d-i$.
\end{claim}

\begin{proof}
Again, we show this by induction, for $i=0$ this is obvious.
Assuming the result is true for $i$, we have to show it for $i+1$.

If $\rm{deg}(\alpha)>d-i-1$ and $\beta$ is any root, then $[\rs{\alpha},\rs{\beta}]\subseteq \rs{\alpha+\beta}\subseteq \mathfrak{u}_{i}$. This is true because $\rm{deg}(\alpha+\beta)=\rm{deg}(\alpha) + \rm{deg}(\beta)>(d-i-1)+1=d-i$ together with the induction hypothesis. This shows that $\rs{\alpha} \subseteq \mathfrak{u}_{i+1}$.

On the other hand, assume $v\in \mathfrak{u}_{i+1}$, and that its projection to some $\rs{\alpha}$ with $\rm{deg}(\alpha)\leq d-i-1$ is not zero. Since $\alpha$ is not the highest root, we know that we can find some simple root $\beta$ such that $\alpha+\beta$ is also a root. Then we see that $[\rs{\beta},v]$ has non-trivial projection to $\rs{\alpha+\beta}$, but by the induction hypothesis $\rs{\alpha+\beta}$ is not inside $\mathfrak{u}_{i}$, which means $v$ is not in the center of $\lcs{}/\mathfrak{u}_{i}$, and this is a contradiction. 
\end{proof}
\begin{notation}
For $i\geq 1$ denote $\mathfrak{u}^{i} = \mathfrak{u}_{i}/\mathfrak{u}_{i-1}$.
\end{notation}

While this claim might seem to suggest that the upper central series holds no more information than the lower central series, as they are just a reordering of each other, this is not the case.

The critical point is that the reordering depends on the simple algebra, so when we sum those for different algebras, they will hold different information.

For example, while $\lcs{}^{1}$ dimension is equal to the number of simple roots in any simple algebra, the last non-zero $\mathfrak{u}^{i}$ has dimension equal to the number of simple roots in the algebras with the maximal degree of the highest root.

Before concluding this section let us quickly mention that we have an analogue  of the bilinear product:
\begin{claim}
The lie bracket induces a product $\lcs{}^{i} \times \mathfrak{u}^{j} \rightarrow \mathfrak{u}^{j-i}$. (When $j-i \leq 0$, $\mathfrak{u}^{j-i}$ is interpreted as zero.)
\end{claim}

\begin{proof}
Like the previous product we just need to show that the bracket takes $\lcs{i}\times \mathfrak{u}_{j}\rightarrow \mathfrak{u}_{j-i}$.

For $i=1$ this is obvious, as $[\lcs{},\mathfrak{u}_{j}]\subseteq \mathfrak{u}_{j-1}$ by definition.

Using induction we get: $$[\lcs{i+1},\mathfrak{u}_{j}] =[[\lcs{i},\lcs{}],\mathfrak{u}_{j}]\subseteq [\lcs{i}, [\lcs{},\mathfrak{u}_{j}]] + [\lcs{}, [\lcs{i},\mathfrak{u}_{j}]] \subseteq $$
$$ \subseteq [\lcs{i},\mathfrak{u}_{j-1}] + [\lcs{},\mathfrak{u}_{j-i}]\subseteq \mathfrak{u}_{j-i-1}$$
\end{proof}

\subsection{Proof}
Now we wish to use the invariants we constructed to prove:
\begin{theoremA}
Let $\rs{1},\rs{2}$ be two semi-simple lie algebras, let $\mathfrak{b}(\rs{1}),\mathfrak{b}(\rs{2})$ be Borel subalgebras, and let $\lcs{}(\rs{i})$ be the nilpotent radical of $\mathfrak{b}(\rs{i})$. Assume that $\lcs{}(\rs{1})\cong \lcs{}(\rs{2})$ then $\rs{1} \cong \rs{2}$.
\end{theoremA}
First let us introduce a notation, any semi-simple lie algebra $\rs{}$ is a sum of simple lie algebra, we will denote the number of times a simple lie algebra with root system $R$ appears in $\rs{}$ by $\rs{}(R)$.

For example $\rs{}(A_{n})$ is the number of summands which are $\mathfrak{sl}_{n+1}$.

Our goal is to find all those numbers from $\lcs{}(\mathfrak{g})$, notice that as long as we use invariants like $\rm{dim}(\lcs{}^{i})$, which are additive, then if we find one coefficient, then we know its impact on all our invariants, and can practically assume the coefficient is zero.

This suggests trying to find all coefficients by induction, since the number of non-zero $\lcs{}^{i}$ or $\mathfrak{u}^{i}$ depends on the degree of the highest root, it is natural to induct on that.

With that in mind, we list the degree of the highest root for all simple algebras.

\begin{center}
\begin{tabular}{||c c||} 
 \hline
 Lie Algebra & maximal degree  \\ [0.5ex] 
 \hline\hline
 $A_n$ & $n$ \\ 
 \hline
 $B_n$ & $2 n - 1$ \\ 
 \hline
 $C_n$ & $2 n - 1$ \\ 
 \hline
 $D_n$ & $2 n - 3$ \\ 
 \hline
 $E_6$ & $11$ \\ 
 \hline
 $E_7$ & $17$ \\ 
 \hline
 $E_8$ & $29$ \\ 
 \hline
 $F_4$ & $11$ \\ 
 \hline
 $G_2$ & $5$ \\  [1ex] 
 \hline
\end{tabular}
\end{center}

For the proof we will denote $n^{i}$ the dimension of $\lcs{}^{i}$, $u^{i}$ will be the dimension of $\mathfrak{u}^{i}$, and $n^{i,j}$ will be the codimension of the right kernel of $\lcs{}^{i}\times \lcs{}^{j} \rightarrow \lcs{}^{i+j}$, which is $n^j$ minus the dimension of the right kernel.

A few observations will be helpful:

\begin{remark}
$n^{i},u^{i},n^{i,j}$ are zero for simple algebras where the maximal degree is less then $i+j$.
\end{remark}
\begin{proof}

For $n^{i},u^{i}$ this is obvious by our description. 

F.,,$n^{i,j}$ this is because the product takes values in $n^{i+j}$, which is zero. So the kernel is everything, and it's codimension is zero.
\end{proof}
\begin{remark}
Since there is always one highest vector, then for a simple algebra with the degree of the highest root $d$, $n^{d}$ is $1$.

In that case $n^{d-1}$ is the number of roots of degree $d-1$ (which is usually $1$). 

On the other hand, $u^{d}$ is the number of simple roots, and $u^{d-1}$ is the number of roots which are sums of two simple roots.

Two simple roots sum to a root iff they are not orthogonal, i.e. if they are connected in the Dynkin diagram, so we always have $u^{d-1} = u^{d} - 1$.
\end{remark}

\begin{comment}
Looking at the table we see that except for a finite number of cases, there are $5$ algebras with degree $2n-2$ or $2n-1$.

These are $A_{2n-2},A_{2n-1},B_{n},C_{n},D_{n+1}$.

Assuming there are no algebras of higher degree then we have 4 invariants depending only on those algebra, the dimensions of $\lcs{}^{2n-1},\lcs{}^{2n-2},\mathfrak{}^{2n-1},\mathfrak{}^{2n-2}$. of course for $B_{n},C_{n}$ these are the same, but with the exception of this fact those invariants end up being linearly independent, then as the (co)dimension of the left kernel of $\lcs{}^{2}\times \lcs{}^{2n-3}\rightarrow \lcs{}^{2n-1}$ also only depends on these algebras, then like in the simple case it will tell us how many $B_{n}$ and $C_{n}$ are there. when some of the exceptionals appear, we have to add more invariants, they will be of the form of the (co)dimension of a left kernel of the bilinear product.

There are a few observations we will continuously use:
\end{comment}

\begin{proof}[Proof of Main Theorem]
We will show that using our invariant one can find $\rs{}(R)$ for any simple algebra.

Our proof will be by induction, assuming we know these coefficients for all algebras with the degree of the highest root $\geq 2n$ we will show we can find those with degree of the highest root $2n-1,2n-2$. (A base case is obvious, if $\lcs{k} = 0$ then there are no algebras with roots of degree $\geq k$).

If $2n-1 \neq 1,3,5,11,17,29$ then the only relevant coefficients are $\rs{}(A_{2n-1}),\rs{}(A_{2n-2}),\rs{}(B_{n}),\rs{}(C_{n}),\rs{}(D_{n+1})$. 

The following table depicts $5$ of our invariants, and their value on these simple algebras, and are easily seen to be zero on algebras with lower maximal degree:
\begin{center}
\begin{tabular}{||c c c c c c||} 
 \hline
  & $A_{2n-1}$ & $A_{2n-2}$ & $B_{n}$ & $C_{n}$ & $D_{n+1}$ \\ [0.5ex] 
 \hline\hline
 $n^{2n-1}$ & $1$ & $0$ & $1$ & $1$ & $1$  \\ 
 \hline
 $n^{2n-2}$ & $2$ & $1$ & $1$ & $1$ & $1$ \\ 
 \hline
 $u^{2n-1}$ & $2n-1$ & $0$ & $n$ & $n$ & $n+1$ \\ 
 \hline
 $u^{2n-2}$ & $2n-2$ & $2n-2$ & $n-1$ & $n-1$ & $n$ \\ 
 \hline
 $n^{2,2n-3}$ & $2$ & $0$ & $2$ & $1$ & $2$ \\ 
 \hline
\end{tabular}
\end{center}

One can check by determinant that this matrix is non-degenerate, this can also easily be done by hand.

Ignoring the algebras with higher maximal degree (which we assumed we know their coefficients) we have:
$$u^{2n-1} - u^{2n-2} -  n^{2n-1} = (2n-1) \rs{}(A_{2n-2}) $$
So we know $\rs{}(A_{2n-2})$.

$$n^{2n-1} - n^{2n-2} - \rs{}(A_{2n-2}) = \rs{}(A_{2n-1})$$ so we know this as well.

$u^{2n-1} - n\cdot n^{2n-1}$ is $\rs{}(D_{n+1})$ up to things we know, so now we know everything but $\rs{}(B_{n}),\rs{}(C_{n})$.

But we know their sum from any of the four first equations, and with the last equation, we can find each of them.

Now we have to consider the case where there are also exceptional, we start with $2n-1 = 29$, in this case, we need to also find $\rs{}(E_{8})$, and we will add the invariant $n^{2,28}$. 

\begin{center}
\begin{tabular}{||c c c c c c c||} 
 \hline
  & $A_{29}$ & $A_{28}$ & $B_{15}$ & $C_{15}$ & $D_{16}$ & $E_{8}$ \\ [0.5ex] 
 \hline\hline
 $n^{29}$ & $1$ & $0$ & $1$ & $1$ & $1$& $1$  \\ 
 \hline
 $n^{28}$ & $2$ & $1$ & $1$ & $1$ & $1$ & $1$ \\ 
 \hline
 $u^{29}$ & $29$ & $0$ & $15$ & $15$ & $16$ & $8$ \\ 
 \hline
 $u^{28}$ & $28$ & $28$ & $14$ & $14$ & $15$ & $7$ \\ 
 \hline
 $n^{2,27}$ & $2$ & $0$ & $2$ & $1$ & $2$ & $1$ \\ 
 \hline
 $n^{2,26}$ & $4$ & $2$ & $1$ & $2$ & $1$ & $1$ \\ 
 \hline
\end{tabular}
\end{center}

Again $u^{29} - u^{28} - n^{29}$ gives us $\rs{}(A_{28})$, and then $n^{28} - n^{29}$ gives us $\rs{}(A_{29})$, so we can ignore them. 

This time $3n^{29} - n^{2,27} - n^{2,26}$ gives us $\rs{}(E_{8})$ and from there we can continue as if this was the previous case.

$2n-1 = 17$:

\begin{center}
\begin{tabular}{||c c c c c c c||} 
 \hline
  & $A_{17}$ & $A_{16}$ & $B_{9}$ & $C_{9}$ & $D_{10}$ & $E_{7}$ \\ [0.5ex] 
 \hline\hline
 $n^{17}$ & $1$ & $0$ & $1$ & $1$ & $1$& $1$  \\ 
 \hline
 $n^{16}$ & $2$ & $1$ & $1$ & $1$ & $1$ & $1$ \\ 
 \hline
 $u^{17}$ & $17$ & $0$ & $9$ & $9$ & $10$ & $7$ \\ 
 \hline
 $u^{16}$ & $16$ & $16$ & $8$ & $8$ & $9$ & $6$ \\ 
 \hline
 $n^{2,15}$ & $2$ & $0$ & $2$ & $1$ & $2$ & $1$ \\ 
 \hline
 $n^{2,14}$ & $4$ & $2$ & $1$ & $2$ & $1$ & $1$ \\ 
 \hline
\end{tabular}
\end{center}
This case is essentially identical to the last case.

$2n-1 = 11$:
This case has two exceptionals, so we need another invariant, we will add $n^{3,8}$.

\begin{center}
\begin{tabular}{||c c c c c c c c||} 
 \hline
  & $A_{11}$ & $A_{10}$ & $B_{6}$ & $C_{6}$ & $D_{7}$ & $E_{6}$ & $F_{4}$  \\ [0.5ex] 
 \hline\hline
 $n^{11}$ & $1$ & $0$ & $1$ & $1$ & $1$ & $1$ & $1$  \\ 
 \hline
 $n^{10}$ & $2$ & $1$ & $1$ & $1$ & $1$ & $1$  & $1$ \\ 
 \hline
 $u^{11}$ & $11$ & $0$ & $6$ & $6$ & $7$ & $6$ & $4$ \\ 
 \hline
 $u^{10}$ & $10$ & $10$ & $5$ & $5$ & $6$ & $5$ & $3$ \\ 
 \hline
 $n^{2,9}$ & $2$ & $0$ & $2$ & $1$ & $2$ & $1$ & $1$ \\ 
 \hline
 $n^{2,8}$ & $4$ & $2$ & $1$ & $2$ & $1$ & $2$ & $1$ \\ 
 \hline
 $n^{3,8}$ & $2$ & $0$ & $2$ & $1$ & $2$ & $2$ & $1$\\ 
 \hline
\end{tabular}
\end{center}

In the same way as before we get $\rs{}(A_{11}),\rs{}(A_{10})$.

Then $n^{3,8}-n^{2,9}$ gives us $\rs{}(E_{6})$.

$3n^{11} - n^{2,9}- n^{2,8}$ gives us $\rs{}(F_{4})$, and then we are left with only non-exceptionals so we can finish like in the original case.

We are left with algebras with a degree of the highest root $\leq 5$, sadly this case turns out to be the messiest.

It turns out that the invariants we used thus far are not enough, even if we take all of the $n^{i},u^{i},n^{i,j}$, and consider them as equations on the $10$ missing coefficients the solutions are not unique.

To fix this we consider the product $\lcs{}^{i} \times \mathfrak{u}^{j} \rightarrow \mathfrak{u}^{j-i}$, which we have not used thus far, like with the other product we take its codimension of the right kernel, which we denote $nu^{i,j}$.
\begin{comment}
\begin{center}
\begin{tabular}{||c c c c c c c ||} 
 \hline
  & $A_{5}$ & $A_{4}$ & $B_{3}$ & $C_{3}$ & $D_{4}$ & $G_{2}$ \\ [0.5ex] 
 \hline\hline
 $n^{5}$ & $1$ & $0$ & $1$ & $1$ & $1$& $1$  \\ 
 \hline
 $n^{4}$ & $2$ & $1$ & $1$ & $1$ & $1$ & $1$ \\ 
 \hline
 $u^{5}$ & $5$ & $0$ & $3$ & $3$ & $4$ & $2$ \\ 
 \hline
 $u^{4}$ & $4$ & $4$ & $2$ & $2$ & $3$ & $1$ \\ 
 \hline
 $n^{2,3}$ & $2$ & $0$ & $2$ & $1$ & $3$ & $1$ \\ 
 \hline
 $n^{2,2}$ & $?$ & $?$ & $0$ & $2$ & $0$ & $0$ \\ 
 \hline
\end{tabular}
\end{center}
\end{comment}
\begin{center}
\begin{tabular}{||c c c c c c c c c c c||} 
 \hline
  & $A_{5}$ & $A_{4}$ & $B_{3}$ & $C_{3}$ & $D_{4}$ & $G_{2}$  & $A_{3}$ & $B_{2}$ & $A_{2}$ & $A_{1}$\\ [0.5ex] 
 \hline\hline
 $n^{5}$ & $1$ & $0$ & $1$ & $1$ & $1$ & $1$ & $0$ & $0$ & $0$ & $0$  \\ 
 \hline
 $n^{4}$ & $2$ & $1$ & $1$ & $1$ & $1$ & $1$ & $0$ & $0$ & $0$ & $0$ \\ 
 \hline
 $n^{3}$ & $3$ & $2$ & $2$ & $2$ & $3$ & $1$ & $1$ & $1$ & $0$ & $0$ \\ 
 \hline
 $n^{2}$ & $4$ & $3$ & $2$ & $2$ & $3$ & $1$ & $2$ & $1$ & $1$ & $0$ \\ 
 \hline
 $n^{1}$ & $5$ & $4$ & $3$ & $3$ & $4$ & $2$ & $3$ & $2$ & $2$ & $1$ \\ 
 \hline
 $u^{5}$ & $5$ & $0$ & $3$ & $3$ & $4$ & $2$ & $0$ & $0$ & $0$ & $0$ \\ 
 \hline
 $u^{4}$ & $4$ & $4$ & $2$ & $2$ & $3$ & $1$ & $0$ & $0$ & $0$ & $0$ \\ 
 \hline
 $u^{3}$ & $3$ & $3$ & $2$ & $2$ & $3$ & $1$ & $3$ & $2$ & $0$ & $0$ \\ 
 \hline
 $n^{2,3}$ & $2$ & $0$ & $2$ & $1$ & $3$ & $1$ & $0$ & $0$ & $0$ & $0$ \\ 
 \hline
 $nu^{2,3}$ & $2$ & $2$ & $2$ & $3$ & $1$ & $2$ & $1$ & $0$ & $0$ & $0$ \\ 
 \hline
\end{tabular}
\end{center}
At this point showing by hand that the matrix is non-degenerate is unproductive.

By entering the matrix into any matrix calculator one can see that it is non-degenerate (its determinant is $-4$), and so we can recover the missing coefficients, and the proof is complete.
\end{proof}

\medskip
\printbibliography
\end{document}